\documentclass[12pt,oneside,english]{amsart}
\usepackage[T1]{fontenc}
\usepackage[latin9]{inputenc}
\usepackage{amstext}
\usepackage{amsthm}

\makeatletter
%%%%%%%%%%%%%%%%%%%%%%%%%%%%%% Textclass specific LaTeX commands.
\theoremstyle{plain}
\newtheorem{thm}{\protect\theoremname}
\theoremstyle{definition}
\newtheorem*{problem*}{\protect\problemname}
\theoremstyle{plain}
\newtheorem{fact}[thm]{\protect\factname}
\theoremstyle{plain}
\newtheorem{lem}[thm]{\protect\lemmaname}
\theoremstyle{plain}
\newtheorem{prop}[thm]{\protect\propositionname}

\makeatother

\usepackage{babel}
\providecommand{\factname}{Fact}
\providecommand{\lemmaname}{Lemma}
\providecommand{\problemname}{Problem}
\providecommand{\propositionname}{Proposition}
\providecommand{\theoremname}{Theorem}

\begin{document}
\title{Forbidden conductors and sequences of $\pm1$s}
\author{Maciej Radziejewski}
\begin{abstract}
We study ``forbidden\textquoteright \textquoteright{} conductors,
i.e. numbers $q>0$ satisfying algebraic criteria introduced by J.~Kaczorowski,
A.~Perelli and M.~Radziejewski, that cannot be conductors of $L$-functions
of degree $2$ from the extended Selberg class. We show that the set
of forbidden $q$ is dense in the interval $(0,4)$, solving a problem
posed in~\cite{ForbiddenConductors}. We also find positive points
of accumulation of rational forbidden $q$.
\end{abstract}

\keywords{Selberg class; forbidden conductors; continued fractions.}
\subjclass[2020]{11M41, 11A55}

\maketitle
The Selberg class $\mathcal{S}$ and the extended Selberg class $\mathcal{S}^{\#}$
are axiomatically defined classes of $L$-functions, cf., e.g.,~\cite{NewSurvey}.
Each $F\in\mathcal{S}^{\#}$ has a degree $d_{F}\geq0$ and a conductor
$q_{F}>0$. It was shown by J.~Kaczorowski and A.~Perelli~\cite{Structure1,Structure2},
cf. also~\cite{Degree0}, that if $d_{F}<2$, then $d_{F}$ and $q_{F}$
are integers. Moreover, if $d\in\left\{ 0,1\right\} $ and $q$ is
a positive integer, then there exists an $L$-function $F\in$$\mathcal{S}^{\#}$
with $d_{F}=d$ and $q_{F}=q$~\cite[Theorems 1 and 2]{Structure1}.
As pointed out by V.~Blomer, Theorem~3 of~\cite{Structure1} implies
that there is no $F\in\mathcal{S}$ with $d_{F}=1$ and $q_{F}=2$.
The structure of 
\[
\mathcal{S}_{2}^{\#}=\left\{ F\in\mathcal{S}^{\#}:d_{F}=2\right\} 
\]
is a subject of current research~\cite{Degree2Conductor1}. J.~Kaczorowski,
A.~Perelli and M.~Radziejewski~\cite{ForbiddenConductors} have
shown the existence of $F\in{\mathcal{S}}_{2}^{\#}$ with $q_{F}=q$
for every real $q\geq4$ and for $q=4\cos^{2}(\pi/m)$ with integer
$m\geq3$~\cite[Lemma 6]{ForbiddenConductors}. They also considered
fractions
\begin{equation}
c(q,{\bf m})=m_{k}+\cfrac{1}{qm_{k-1}+\cfrac{q}{qm_{k-2}+\cfrac{q}{\ddots\,+\cfrac{q}{qm_{0}}}}}\label{eq:cqm}
\end{equation}
where $q>0$ and ${\bf m}=(m_{0},\dots,m_{k})\in\mathbf{Z}^{k+1}$.
For a given $q$ the sequence ${\bf m}$ is called a path if~(\ref{eq:cqm})
is well defined, i.e. there are no zeros in denominators. It is called
a loop if $c(q,{\bf m})=0$. A sequence ${\bf m}$ is therefore a
path if and only if none of $(m_{0})$, $(m_{0},m_{1})$, \ldots ,
$(m_{0},\dots,m_{k-1})$ is a loop. The weight of ${\bf m}$ is then
defined as
\begin{equation}
w(q,{\bf m})=q^{k/2}\prod_{j=0}^{k-1}\left|c(q,{\bf m}_{j})\right|,\label{eq:wqm}
\end{equation}
where ${\bf m}_{j}=(m_{0},\dots,m_{j})$ for $0\leq j\leq k$. We
also write $c(q,m_{0},\dotsc,m_{n})$ and $w(q,m_{0},\dotsc,m_{n})$
where appropriate. The set of loops for a given $q$ is denoted as
$L(q)$. In particular, $\mathbf{m}=(0)\in L(q)$ and $w(q,\mathbf{m})=1$.
It was shown in~\cite[Theorem 1]{ForbiddenConductors} that if $q=q_{F}$
is a conductor of an $L$-function $F\in{\mathcal{S}}_{2}^{\#}$,
then 
\begin{equation}
w(q,\mathbf{m})=1\text{ for all }\mathbf{m}\in L(q),\label{eq:unit-weight-on-L(q)}
\end{equation}
and that (\ref{eq:unit-weight-on-L(q)}) implies that $w(q,\mathbf{m})$
is a function of $c(q,\mathbf{m})$, i.e. $w(q,\mathbf{m})=w(q,\mathbf{m}')$
for all paths $\mathbf{m},\mathbf{m}'$ satisfying $c(q,\mathbf{m})=c(q,\mathbf{m}')$.
In the present paper, for brevity, we call $q$ forbidden if~(\ref{eq:unit-weight-on-L(q)})
is false. 

In principle, to show that a fixed $q\in(0,4)$ is forbidden, it is
enough to find a loop $\mathbf{m}\in L(q)$ with $w(q,\mathbf{m})\neq1$.
For rational $q=a/b$ this process bears some similarity to Euclid's
Algorithm, but in general we have no deterministic algorithm to find
such a loop. The search for loops of non-unit weight becomes harder
for large $a$ and, in particular, when $q$ gets closer to $4$.
It was shown in~\cite{ForbiddenConductors} that the following $q$s
are forbidden:
\begin{equation}
q=\frac{4}{n}\cos^{2}(\pi\ell/(2k+1))\label{eq:forbidden-set-old}
\end{equation}
with integers $k,\ell,n$ such that $k\geq1$, $1\leq\ell<2k+1$,
$(\ell,2k+1)=1$, $n\geq2$. The $q$s satisfying~(\ref{eq:forbidden-set-old})
are dense in the interval $0<q<2$. In addition, some rational $q$s
were also determined to be forbidden:
\begin{itemize}
\item infinitely many with $0<q<1$ (with a point of accumulation in $0$),
\item 16271 $q$s with $1<q<2$,
\item 3865 $q$s with $2<q<3$,
\item 293 $q$s with $3<q<4$,
\end{itemize}
cf.~the online table of computation results accompanying~\cite{ForbiddenConductors}.
In the present paper we solve one of the problems posed in~\cite{ForbiddenConductors}
and construct a set of forbidden $q$ that is dense in $(0,4)$. We
also show that the set of forbidden rational $q$s has positive points
of accumulation, including $\frac{3\pm\sqrt{5}}{2}$.
\begin{thm}
\label{thm:forbidden-qs-are-dense}The set of forbidden $q$s is dense
in the interval $(0,4)$.
\end{thm}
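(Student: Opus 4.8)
The plan is to rewrite the loop/weight data recursively, then to reduce density in $(0,4)$ to approximating a dense family of ``Chebyshev points'' $4\cos^{2}(\pi a/b)$ by forbidden $q$, and finally to produce such $q$ by appending one integer to an alternating almost-loop.

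\emph{A recursion.} From (\ref{eq:cqm}) one gets $c(q,\mathbf{m}_{j})=m_{j}+\bigl(q\,c(q,\mathbf{m}_{j-1})\bigr)^{-1}$ with $c(q,\mathbf{m}_{0})=m_{0}$, so $y_{j}:=\sqrt{q}\,c(q,\mathbf{m}_{j})$ satisfies $y_{j}=\sqrt{q}\,m_{j}+y_{j-1}^{-1}$. In matrix form this identifies $y_{j}=u_{j}/u_{j-1}$, where $u_{-2}=0$, $u_{-1}=1$ and
\[
u_{j}=\sqrt{q}\,m_{j}\,u_{j-1}+u_{j-2}\qquad(0\le j\le k).
\]
Then $\mathbf{m}=(m_{0},\dots,m_{k})$ is a path iff $u_{-1},\dots,u_{k-1}$ are all nonzero, a loop iff moreover $u_{k}=0$, and since $w(q,\mathbf{m})=\prod_{j=0}^{k-1}|y_{j}|=\prod_{j=0}^{k-1}|u_{j}/u_{j-1}|$ telescopes (using $u_{-1}=1$),
\[
w(q,\mathbf{m})=|u_{k-1}|.
\]
So $q$ is forbidden once one exhibits integers $m_{0},\dots,m_{k}$ with $u_{-1},\dots,u_{k-1}\neq0$, $u_{k}=0$ and $|u_{k-1}|\neq1$.

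\emph{Reduction to Chebyshev points.} For the alternating sequence $m_{j}=(-1)^{j}$ the recursion has constant coefficients up to a sign of period $4$; substituting $u_{j}=\sigma_{j}w_{j}$ with $\sigma_{j}\in\{\pm1\}$ of period $4$ gives $w_{j}=\sqrt{q}\,w_{j-1}-w_{j-2}$, $w_{-1}=1$, $w_{0}=\sqrt{q}$, whence, writing $\sqrt{q}=2\cos\psi$ with $\psi\in(0,\pi)$ (so $q=4\cos^{2}\psi$ sweeps $(0,4)$),
\[
u_{j}=\pm\frac{\sin\bigl((j+2)\psi\bigr)}{\sin\psi}.
\]
In particular, for $\psi_{0}=\pi a/b$ in lowest terms with $b\ge3$, the truncation $\bigl((-1)^{0},(-1)^{1},\dots,(-1)^{b-2}\bigr)$ is a loop at $q_{0}=4\cos^{2}(\pi a/b)$ of weight $|\cos\pi a|=1$ --- forced, since $q_{0}\in(0,4)$ need not be forbidden. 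The points $q_{0}=4\cos^{2}(\pi a/b)$ with $a/b\neq1/2$ are dense in $(0,4)$, so it suffices to find forbidden $q$ arbitrarily close to each of them.

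\emph{Closing an almost-loop.} Fix such $a/b$, put $k=b-2$, and append one integer $n$, forming $\mathbf{m}=\bigl((-1)^{0},\dots,(-1)^{k},n\bigr)$. The loop condition $u_{k+1}=0$ reads $n\sqrt{q}\,u_{k}=-u_{k-1}$; inserting the values of $u_{k},u_{k-1}$ and using $2\cos\psi\,\sin((k+2)\psi)=\sin((k+3)\psi)+\sin((k+1)\psi)$, this collapses to the single equation
\[
\frac{\sin\bigl((b+1)\psi\bigr)}{\sin\bigl((b-1)\psi\bigr)}=\frac{\varepsilon}{n}-1,\qquad \varepsilon\in\{\pm1\}\text{ fixed.}
\]
At $\psi=\psi_{0}$ the left-hand side equals $-1$ with $\psi$-derivative $-2b\cos\psi_{0}/\sin\psi_{0}\neq0$ (here $a/b\neq1/2$ is used), so for every integer $n$ with $|n|$ large there is a real $\psi_{n}\to\psi_{0}$, $\psi_{n}\neq\psi_{0}$, solving it, and $q=4\cos^{2}\psi_{n}\in(0,4)$ lies near $q_{0}$. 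For $|n|$ large $\psi_{n}$ stays off the finite set $\{\pi\ell/i:\ell\in\mathbf{Z},\,1\le i\le b\}$, giving $u_{-1},\dots,u_{k}\neq0$, and
\[
w(q,\mathbf{m})=|u_{k}|=\Bigl|\frac{\sin(b\psi_{n})}{\sin\psi_{n}}\Bigr|\longrightarrow\Bigl|\frac{\sin\pi a}{\sin\psi_{0}}\Bigr|=0,
\]
hence $<1$ for $|n|$ large. Thus these $q$ are forbidden and accumulate at $q_{0}$; letting $a/b$ vary yields forbidden $q$ dense in $(0,4)$. (A refinement --- choosing $n$ along a suitable arithmetic/Pell progression so that the resulting $q$ becomes rational --- is the natural route to the rational accumulation points $\tfrac{3\pm\sqrt5}{2}=4\cos^{2}(\pi/5),\,4\cos^{2}(2\pi/5)$ of the abstract.)

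The first two steps are routine once the trigonometric identities are in place; the crux is the last step, and the delicate point there is to secure \emph{simultaneously} that the appended word genuinely closes to a loop at a \emph{real} $q\in(0,4)$ as close to $q_{0}$ as desired \emph{and} that no intermediate $u_{j}$ accidentally vanishes --- i.e.\ that the solution $\psi_{n}$ of the displayed equation cannot be pinned to one of the finitely many resonant angles. Non-vanishing of the derivative of $\sin((b+1)\psi)/\sin((b-1)\psi)$ at $\psi_{0}$ is exactly what drives the implicit-function/intermediate-value argument and confines $\psi_{n}$ to a controlled neighbourhood of $\psi_{0}$.
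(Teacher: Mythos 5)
Your argument is correct and follows essentially the same route as the paper: both consider loops consisting of an alternating $\pm1$ chain followed by a single large integer, and both use an intermediate-value argument to close such loops at parameters accumulating at the Chebyshev points $4\cos^{2}(\pi a/b)$, which are exactly the squared roots of the paper's polynomials $g_{n}$. The only differences are presentational: you solve the recursion in closed trigonometric form, so that the non-unit weight follows from $|u_{k}|\to0$, whereas the paper works with the polynomials $g_{n}$ and the identity $g_{n}^{2}+g_{n+1}g_{n-1}=1$ to obtain an exact weight formula (its Lemma~\ref{lem:non-unit-pm1+c}).
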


\begin{thm}
\label{thm:forbidden-rational-qs-accumulate}The set of accumulation
points of forbidden rational $q$s contains $\frac{3-\sqrt{5}}{2}\cong0.381966$
and $\frac{3+\sqrt{5}}{2}\cong2.618034$.
\end{thm}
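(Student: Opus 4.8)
The plan is to construct, for all sufficiently large $j$, an explicit loop of length four certifying that a certain rational $q_{j}$ close to $\frac{3+\sqrt5}{2}$ is forbidden, and a companion family for $\frac{3-\sqrt5}{2}$; both families will come from one and the same combinatorial loop, evaluated at the two roots of a single quadratic. Throughout write $\varphi=\frac{1+\sqrt5}{2}$, so that $\frac{3+\sqrt5}{2}=\varphi^{2}=4\cos^{2}(\pi/5)$ and $\frac{3-\sqrt5}{2}=\varphi^{-2}=4\cos^{2}(2\pi/5)$, and let $(F_{n})_{n\ge0}$ be the Fibonacci numbers.

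The first step is to notice that $c(q,1,-1,1,-1)=\dfrac{q^{2}-3q+1}{q(2-q)}$, so the purely alternating sequence $(1,-1,1,-1)$ is a loop exactly at the roots $q=\varphi^{\pm2}$ of $q^{2}-3q+1$; since these are irrational, I would perturb the leading term. Running the recursion $y_{0}=m$, $y_{n}=m_{n}+1/(qy_{n-1})$ I would show that $\mathbf{m}=(m,1,-1,1,-1)$ is a loop for $q$ if and only if $m(q^{2}-3q+1)=2-q$, and that along any such loop $y_{3}=1/q$; the weight then telescopes, $w(q,\mathbf{m})=q^{2}\lvert y_{0}y_{1}y_{2}y_{3}\rvert=\lvert m(q-1)+1\rvert$. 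Hence every rational $q$ for which there is an integer $m\neq0$ with $m(q^{2}-3q+1)=2-q$ and with $y_{0},y_{1},y_{2}$ nonzero is forbidden, unless $m(q-1)\in\{0,-2\}$.

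Next I would specialise $q$ to the convergents $q_{j}=F_{2j+1}/F_{2j-1}$ of $\varphi^{2}=[2;\overline{1}]$. Using the Fibonacci recurrence and the identity $F_{2j-2}F_{2j}-F_{2j-1}^{2}=-1$ one computes $2-q_{j}=-F_{2j-2}/F_{2j-1}$ and $q_{j}^{2}-3q_{j}+1=-1/F_{2j-1}^{2}$, so the required value of $m$ is the \emph{integer} $m_{j}:=F_{2j-1}F_{2j-2}$. Since $q_{j}>2$ and $m_{j}>0$, the partial values $y_{0},y_{1},y_{2}$ are nonzero, so $\mathbf{m}_{j}:=(m_{j},1,-1,1,-1)\in L(q_{j})$, and $w(q_{j},\mathbf{m}_{j})=m_{j}(q_{j}-1)+1>1$; thus $q_{j}$ is forbidden, and as $q_{j}\to\varphi^{2}$ through distinct rationals, $\frac{3+\sqrt5}{2}$ is an accumulation point of forbidden rationals. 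For $\frac{3-\sqrt5}{2}$ I would use the \emph{other} root $q'_{j}=(m_{j}-2)/(m_{j}q_{j})$ of the same quadratic $m_{j}q^{2}-(3m_{j}-1)q+(m_{j}-2)=0$: the very same sequence $\mathbf{m}_{j}$ is then a loop for $q'_{j}$, now with $0<q'_{j}<1$, which again forces $y_{0},y_{1},y_{2}\neq0$; its weight is $\lvert m_{j}(q'_{j}-1)+1\rvert$, which is $>1$ for all large $j$ (in fact equal to $F_{2j-2}^{2}$), and $q'_{j}\to\varphi^{-2}$ through distinct rationals.

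I do not anticipate a single hard analytic step; the difficulty is in the set-up and the bookkeeping. The creative point is spotting that one should perturb the leading coefficient of the ``alternating'' polynomial $q^{2}-3q+1$ and that the Cassini-type identity $F_{2j-2}F_{2j}-F_{2j-1}^{2}=-1$ is exactly what makes the resulting $m$ an integer at the convergents of $\varphi^{2}$ — this is also why the argument singles out $\frac{3\pm\sqrt5}{2}$ rather than a generic $4\cos^{2}(\pi/k)$. The remaining work is routine but must be done with care: checking that each $\mathbf{m}_{j}$ is an honest path, i.e.\ that no partial denominator $qy_{n}$ vanishes, which requires treating the regimes $q_{j}>2$ and $0<q'_{j}<1$ separately; verifying that the closed form $\lvert m(q-1)+1\rvert$ for the weight is $\neq1$ on the two families; and confirming the handful of Fibonacci identities used along the way.
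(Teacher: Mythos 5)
Your argument is correct and is essentially the paper's proof: both rest on the length-four alternating chain perturbed by a single integer parameter, on the quadratic $q^{2}-3q+1$, and on the norm equation $a^{2}-3ab+b^{2}=\pm1$ solved by golden-ratio units (your convergents $F_{2j+1}/F_{2j-1}$ together with the Cassini-type identity are exactly those units written out in the Fibonacci basis). The only cosmetic differences are that you perturb $m_{0}$ instead of $m_{4}$, so your weight comes out as $F_{2j-1}^{2}$ rather than the paper's $1/b^{2}$, and that you reach the second accumulation point $\frac{3-\sqrt{5}}{2}$ via the conjugate root of the quadratic in $q$ rather than by rerunning the unit construction at the other embedding.
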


We start by listing further definitions and notation to be used in
the paper. Next we show that, when $q$ is close to $4$, a loop $\mathbf{m}\in L(q)$
must contain a long chain of alternating $\pm1$s, except for some
trivial loops, that always have a unit weight. This fact will not
be used directly in our main result, but it serves to set the scene,
explaining why we consider loops based on such chains.

In Section~\ref{sec:Proof1} we prove our main result, Theorem~\ref{thm:forbidden-qs-are-dense}.
The proof is based on the study of loops of the form
\begin{equation}
(m_{0},\dotsc,m_{n})=(1,-1,\dotsc,(-1)^{n-1},(-1)^{n}+c).\label{eq:pm1+c}
\end{equation}
If $c=0$ or $c=(-1)^{n+1}$, such loops must have unit weight, however,
for other values of $c$ this is not so, cf. Lemma~\ref{lem:non-unit-pm1+c}.
Using the fact that $c(q,\mathbf{m})$ is a rational function of $q$
and the Darboux property we construct a dense set of forbidden $q$s.

In Section~\ref{sec:Proof2} we show Theorem~\ref{thm:forbidden-rational-qs-accumulate}
by finding rational $q$s for which~(\ref{eq:pm1+c}) with $n=4$
is a loop. The problem comes down to finding integer values of a given
rational function at rational arguments $a/b$. The denominator of
our rational function can be interpreted as the norm of an element
$a-b\omega$ in an algebraic number field $\mathbf{Q}(\omega)$. Hence
we only need to find enough units of the form $a-b\omega$, as for
these the value of the function will be an integer. This approach
strictly depends on loop length being equal to $4$, as in that case
the field $\mathbf{Q}(\omega)$ is quadratic, so the general form
of units coincides with the form of factors in the decomposition of
a homogeneous polynomial in $a$ and $b$.

The problem of finding integer values of a rational function seems
to be of independent interest.
\begin{problem*}
Given a rational function $f\in\mathbf{Q}(x)$ determine the set of
$x\in\mathbf{Q}$ such that $f(x)\in\mathbf{Z}$.
\end{problem*}

\section{Preliminaries}

We use the notation $e(x)=e^{2\pi ix}$ and write $\left\lfloor x\right\rfloor $
for the largest integer $\leq x$. Given $q>0$ we call a loop ${\bf m}=(m_{0},\dots,m_{n})$
proper if $n=0$ or $m_{j}\neq0$ for all $j$. It was shown in~\cite{ForbiddenConductors}
that the study of loops, and of condition~(\ref{eq:unit-weight-on-L(q)})
in particular, can be reduced to proper loops, through an equivalence
reminiscent of homotopy. In fact proper loops have a group structure
and weight is a multiplicative homomorphism on proper loops.

Now we define polynomials in variables $\lambda,m_{0},m_{1},\dotsc$
They are related to the numerator and denominator of~(\ref{eq:cqm})
when $q=\lambda^{2}$. Let $\mathcal{I}$ denote the set of finite
subsets of $I\subset\mathbf{N}_{0}$ such that
\[
i\equiv\left|[0,i)\cap I\right|\pmod{2}\quad\text{for all}\quad i\in I.
\]
Let
\begin{multline}
f_{n}=f_{n}(\lambda,m_{0},\dotsc,m_{n-1})=\sum_{\substack{0\leq k\leq n\\
k\equiv n\,(\text{mod }2)
}
}\left(\sum_{\substack{I\in\mathcal{I}\\
I\subseteq[0,n)\\
\left|I\right|=k
}
}\prod_{i\in I}m_{i}\right)\lambda^{k},\\
n\geq0.\label{eq:fn-explicit}
\end{multline}
The polynomials $P_{n}$ and $Q_{n}$ in~\cite[Proof of Theorem 2]{ForbiddenConductors}
are related to $f_{n}$ by $P_{n}(\lambda^{2},\mathbf{m})=\lambda^{n}f_{n+1}(\lambda,\mathbf{m})$
and $Q_{n}(\lambda^{2},\mathbf{m})=\lambda^{n+1}f_{n}(\lambda,\mathbf{m}_{n-1})$.
\begin{fact}
\label{fact:f-degree}$f_{n}$ has degree $n$ with respect to $\lambda$
and the leading coefficient is $\prod_{i=0}^{n-1}m_{i}$. 
\end{fact}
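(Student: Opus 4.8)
The plan is to isolate the top-degree term in $\lambda$ directly from the defining formula~\eqref{eq:fn-explicit}. First I would observe that every monomial appearing in $f_n$ has the shape (integer coefficient in the $m_i$) times $\lambda^k$ with $0\le k\le n$, so immediately $\deg_\lambda f_n\le n$. Thus it suffices to compute the coefficient of $\lambda^n$ and check that it is not the zero polynomial.

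For the $k=n$ summand, the inner sum in~\eqref{eq:fn-explicit} ranges over $I\in\mathcal{I}$ with $I\subseteq[0,n)$ and $|I|=n$. Since $[0,n)$ has exactly $n$ elements, the only possible such $I$ is $I=[0,n)$ itself. The one genuine point to verify is that $[0,n)$ really belongs to $\mathcal{I}$: for any $i\in[0,n)$ we have $[0,i)\cap[0,n)=[0,i)$, which has cardinality $i$, so the congruence $i\equiv\lvert[0,i)\cap I\rvert\pmod 2$ holds trivially. Hence $[0,n)\in\mathcal{I}$ and it is the unique index set contributing to the $\lambda^n$ term, so the coefficient of $\lambda^n$ in $f_n$ equals $\prod_{i\in[0,n)}m_i=\prod_{i=0}^{n-1}m_i$.

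Since this product is a nonzero element of $\mathbf{Z}[m_0,\dots,m_{n-1}]$, it follows that $f_n$ has degree exactly $n$ in $\lambda$, with leading coefficient $\prod_{i=0}^{n-1}m_i$, as claimed; the boundary cases $n=0$ and $n=1$ are handled by the usual convention that an empty product is $1$, giving $f_0=1$ and $f_1=m_0\lambda$. There is no real obstacle in this argument — the only step one should not skip is the membership check $[0,n)\in\mathcal{I}$, since the defining parity condition on $\mathcal{I}$ looks more restrictive than it is on the particular set $[0,n)$.
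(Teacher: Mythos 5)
Your proof is correct and follows the route the paper implicitly intends: the paper states this Fact without proof as an immediate consequence of the defining formula~(\ref{eq:fn-explicit}), and your argument simply makes explicit the reading-off of the $\lambda^{n}$ coefficient, including the worthwhile check that $[0,n)\in\mathcal{I}$. No gaps.
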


\begin{lem}
\label{lem:fn-recurrence}We have
\[
f_{0}=1,
\]
\[
f_{1}=m_{0}\lambda,
\]
and
\begin{equation}
f_{n+1}=m_{n}\lambda f_{n}+f_{n-1},\qquad n\geq1.\label{eq:fn-recurrence}
\end{equation}
\end{lem}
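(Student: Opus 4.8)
The plan is to read everything off the explicit formula~(\ref{eq:fn-explicit}). First I would rewrite $f_{n}$ as the single sum $\sum\left(\prod_{i\in I}m_{i}\right)\lambda^{|I|}$ over all $I\in\mathcal{I}$ with $I\subseteq[0,n)$ and $|I|\equiv n\pmod 2$ (the bound $|I|\le n$ being automatic); call such an $I$ \emph{admissible for $f_{n}$}. Then $f_{0}=1$ and $f_{1}=m_{0}\lambda$ follow at once: for $n=0$ the only admissible set is $I=\emptyset$, contributing the empty product $1$, and for $n=1$ it is $I=\{0\}$, which indeed lies in $\mathcal{I}$ since $0\equiv|[0,0)\cap\{0\}|=0$, contributing $m_{0}\lambda$.

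For the recurrence, fix $n\ge 1$ and split the sets $I$ admissible for $f_{n+1}$ --- that is, $I\in\mathcal{I}$ with $I\subseteq[0,n+1)$ and $|I|\equiv n+1\pmod 2$ --- according to whether or not $n\in I$. In the case $n\in I$, I would check that $I\mapsto J:=I\setminus\{n\}$ is a bijection onto the sets admissible for $f_{n}$: deleting the largest possible element $n$ leaves $[0,i)\cap I$ unchanged for every $i<n$, so the congruence defining $\mathcal{I}$ survives at each $i\in J$, while that same congruence at the element $n$ itself reads $n\equiv|[0,n)\cap I|=|J|\pmod 2$, which is exactly the parity condition $|J|\equiv n\pmod 2$ required of admissible sets for $f_{n}$. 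Since $\prod_{i\in I}m_{i}=m_{n}\prod_{i\in J}m_{i}$ and $\lambda^{|I|}=\lambda\cdot\lambda^{|J|}$, this portion of the sum equals $m_{n}\lambda f_{n}$.

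In the case $n\notin I$ we have $I\subseteq[0,n)$, and the crucial observation is that such an $I$ must in fact avoid $n-1$ as well: if $n-1\in I$, then the congruence defining $\mathcal{I}$ at $n-1$ gives $n-1\equiv|[0,n-1)\cap I|=|I|-1\pmod 2$, hence $|I|\equiv n\pmod 2$, contradicting $|I|\equiv n+1\pmod 2$. So $I\subseteq[0,n-1)$; conversely, any $I\in\mathcal{I}$ with $I\subseteq[0,n-1)$ and $|I|\equiv n-1\equiv n+1\pmod 2$ arises this way, i.e.\ is admissible for $f_{n-1}$. Hence this portion of the sum is exactly $f_{n-1}$, and adding the two portions gives $f_{n+1}=m_{n}\lambda f_{n}+f_{n-1}$. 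The only step that is not pure bookkeeping is this last observation about $n-1$: it is what reconciles the three parity classes ($\equiv n+1$, $\equiv n$, $\equiv n-1$) with the three index ranges ($[0,n+1)$, $[0,n)$, $[0,n-1)$), and I expect it to be the heart of the argument.
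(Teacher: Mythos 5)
Your proof is correct and follows essentially the same route as the paper: split the admissible sets for $f_{n+1}$ according to whether $n\in I$, identify the first class with the admissible sets for $f_{n}$, and use the parity condition at $n-1$ to show the second class avoids $n-1$ and hence coincides with the admissible sets for $f_{n-1}$. Your write-up is merely more explicit about the base cases and the bijection details, which the paper leaves implicit.
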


\begin{proof}
For $n\in I\subset[0,n+1)$, $I\in\mathcal{I}$ and $\left|I\right|\equiv n+1\pmod{2}$,
we have $I=\left\{ n\right\} \cup I'$ with $I'\subset[0,n)$ and
$\left|I'\right|\equiv n\pmod{2}$. If $n\notin I\subset[0,n+1)$,
$I\in\mathcal{I}$ and $\left|I\right|\equiv n+1\pmod{2}$, then also
$n-1\notin I$, as otherwise we would get
\[
n\equiv\left|I\setminus\left\{ n-1\right\} \right|=\left|I\cap[0,n-1)\right|\equiv n-1\pmod{2},
\]
a contradiction. This implies~(\ref{eq:fn-recurrence}). 
\end{proof}
\begin{lem}
If $\lambda>0$ and $\mathbf{m}=(m_{0},\dotsc,m_{n})\in\mathbf{Z}^{n+1}$
is a path for $q=\lambda^{2}$, then
\begin{equation}
c(\lambda^{2},\mathbf{m})=\frac{f_{n+1}(\lambda,\mathbf{m})}{\lambda f_{n}(\lambda,\mathbf{m}_{n-1})}\label{eq:c-by-continuants}
\end{equation}
and
\begin{equation}
w(\lambda^{2},\mathbf{m})=\left|f_{n}(\lambda,\mathbf{m}_{n-1})\right|.\label{eq:w-by-continuants}
\end{equation}
\end{lem}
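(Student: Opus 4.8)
The plan is to prove both formulas simultaneously by induction on $n$, using the recurrence for $f_n$ from Lemma~\ref{lem:fn-recurrence} together with the defining recursion of the continued fraction~(\ref{eq:cqm}). First I would record the base case: for $n=0$ the sequence is $\mathbf{m}=(m_0)$, we have $c(\lambda^2,\mathbf{m})=m_0$, and the right-hand side of~(\ref{eq:c-by-continuants}) is $f_1(\lambda,m_0)/(\lambda f_0(\lambda))=m_0\lambda/\lambda=m_0$, while $w(\lambda^2,\mathbf{m})=\lambda^0\cdot(\text{empty product})=1=|f_0|$, so both~(\ref{eq:c-by-continuants}) and~(\ref{eq:w-by-continuants}) hold.

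For the inductive step, the key observation is that the continued fraction~(\ref{eq:cqm}) satisfies
\[
c(q,m_0,\dotsc,m_n)=m_n+\frac{q}{q\cdot c(q,m_0,\dotsc,m_{n-1})},
\]
or more usefully, after clearing, $c(q,\mathbf{m})=m_n+\dfrac{1}{c(q,\mathbf{m}_{n-1})}$ when one reads the tail of~(\ref{eq:cqm}) correctly; I would first pin down this recursion precisely from~(\ref{eq:cqm}), since the $q$-weighting in the denominators means one must track factors of $q=\lambda^2$ carefully. Assuming~(\ref{eq:c-by-continuants}) for $\mathbf{m}_{n-1}=(m_0,\dotsc,m_{n-1})$, namely $c(\lambda^2,\mathbf{m}_{n-1})=f_n(\lambda,\mathbf{m}_{n-1})/(\lambda f_{n-1}(\lambda,\mathbf{m}_{n-2}))$, I substitute into the recursion for $c$ and use $f_{n+1}=m_n\lambda f_n+f_{n-1}$ to get exactly the quotient $f_{n+1}/(\lambda f_n)$; this is a routine algebraic manipulation once the recursion for $c$ is established. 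For~(\ref{eq:w-by-continuants}), I would combine the definition~(\ref{eq:wqm}) with the already-proven case~(\ref{eq:c-by-continuants}): since $w(q,\mathbf{m})=q^{k/2}\prod_{j=0}^{k-1}|c(q,\mathbf{m}_j)|$ with $k=n$, and each factor $|c(\lambda^2,\mathbf{m}_j)|=|f_{j+1}(\lambda,\mathbf{m}_j)|/(\lambda|f_j(\lambda,\mathbf{m}_{j-1})|)$, the product telescopes, leaving $\lambda^n\cdot|f_n(\lambda,\mathbf{m}_{n-1})|/(\lambda^n|f_0|)=|f_n(\lambda,\mathbf{m}_{n-1})|$, as desired.

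The main obstacle I anticipate is purely bookkeeping: getting the recursion relating $c(q,\mathbf{m})$ to $c(q,\mathbf{m}_{n-1})$ exactly right from the somewhat unusual form~(\ref{eq:cqm}), in which the innermost denominator is $qm_0$ rather than $m_0$ and every level carries a factor $q$. In particular one must verify that the path hypothesis (no zero denominators for any of the truncations) is exactly what guarantees $f_j(\lambda,\mathbf{m}_{j-1})\neq 0$ for all relevant $j$, so that all the fractions above are legitimate; this follows by induction from~(\ref{eq:c-by-continuants}) itself, since a zero of $f_j$ would force a zero denominator in $c(q,\mathbf{m}_{j-1})$, contradicting the assumption that $\mathbf{m}_{j-1}$ is a path. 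Once the recursion for $c$ and this non-vanishing are in hand, both displayed identities drop out mechanically from Lemma~\ref{lem:fn-recurrence}.
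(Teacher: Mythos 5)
Your approach --- induction on $n$ using the recurrence $f_{n+1}=m_{n}\lambda f_{n}+f_{n-1}$ together with the recursive structure of~(\ref{eq:cqm}), then telescoping in~(\ref{eq:wqm}) to get~(\ref{eq:w-by-continuants}) --- is exactly the paper's argument, and your base case and telescoping are correct. The one point that must be repaired is the recursion for $c$ itself: from~(\ref{eq:cqm}) the denominator under the top-level $1$ is $qm_{n-1}+\frac{q}{\cdots}=q\bigl(m_{n-1}+\frac{1}{\cdots}\bigr)=q\,c(q,\mathbf{m}_{n-1})$, so the correct relation is $c(q,\mathbf{m})=m_{n}+\frac{1}{q\,c(q,\mathbf{m}_{n-1})}$, not $m_{n}+\frac{1}{c(q,\mathbf{m}_{n-1})}$ as in your ``after clearing'' version. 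With your version the inductive step does not close: you would obtain $m_{n}+\lambda f_{n-1}/f_{n}$ rather than $m_{n}+f_{n-1}/(\lambda f_{n})=f_{n+1}/(\lambda f_{n})$. With the correct relation, substituting $q=\lambda^{2}$ and the inductive hypothesis $c(\lambda^{2},\mathbf{m}_{n-1})=f_{n}/(\lambda f_{n-1})$ gives $m_{n}+\frac{1}{\lambda^{2}\cdot f_{n}/(\lambda f_{n-1})}=m_{n}+\frac{f_{n-1}}{\lambda f_{n}}$, which is the paper's computation~(\ref{eq:fn-ratio-shift}). Since you explicitly flagged pinning down this recursion (tracking the factors of $q$) as the step requiring care, this is a correctable slip rather than a conceptual gap; the remaining points, including the observation that the path hypothesis is what guarantees $f_{j}(\lambda,\mathbf{m}_{j-1})\neq0$ for all relevant $j$, are sound.
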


\begin{proof}
By~(\ref{eq:fn-recurrence}) we have
\begin{align}
\frac{f_{n+1}}{\lambda f_{n}} & =\frac{m_{n}\lambda f_{n}+f_{n-1}}{\lambda f_{n}}\label{eq:fn-ratio-shift}\\
 & =m_{n}+\frac{1}{\lambda^{2}\frac{f_{n}}{f_{n-1}}},\qquad n\geq1\nonumber 
\end{align}
and~(\ref{eq:c-by-continuants}) follows. Then, by~(\ref{eq:wqm})
and~(\ref{eq:c-by-continuants}) we obtain~(\ref{eq:w-by-continuants}).
\end{proof}

\section{A necessary condition for loops}

Suppose $2<q<4$. Let $x_{0}=+\infty$, $x_{1}=1$ and 
\[
x_{n+1}=\begin{cases}
1-\frac{1}{qx_{n}}, & x_{n}\geq\frac{1}{q},\\
0 & \text{otherwise,}
\end{cases}
\]
for $n\geq1$. The sequence $(x_{n})$ is weakly decreasing and bounded,
so it has a limit point. The map $x\mapsto1-\frac{1}{qx}$ has no
fixed point in $(0,+\infty)$, therefore $\lim_{n\to\infty}x_{n}=0$.
Let $C(q)$ denote the largest integer such that $x_{C(q)}\geq\frac{1}{q}$.
\begin{prop}
\label{prop:long-chain}Let $2<q<4$ and let $\mathbf{m}=(m_{0},\dotsc,m_{k})\in L(q)$
be a non-zero proper loop. Let $\ell$ be the smallest non-negative
integer such that $\left|c(q,\mathbf{m}_{j})\right|\leq1$ for $\ell\leq j\leq k$.
Then we have $k\geq\ell+C(q)$ and
\[
m_{j}=(-1)^{j}\varepsilon,\qquad\ell\leq j\leq\ell+C(q)-1,
\]
for some $\varepsilon=\pm1$.
\end{prop}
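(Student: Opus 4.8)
The plan is to work throughout with the recurrence $c(q,\mathbf{m}_j)=m_j+\frac{1}{q\,c(q,\mathbf{m}_{j-1})}$ for $j\ge1$, which follows directly from the continued fraction~(\ref{eq:cqm}) by peeling off the outer layer; abbreviate $c_j:=c(q,\mathbf{m}_j)$. Since $\mathbf{m}$ is a non-zero proper loop we have $k\ge1$, every $m_j\neq0$, $c_j\neq0$ for $0\le j<k$, and $c_k=0$. First I would read off from $c_k=0$ that $c_{k-1}=-\frac{1}{qm_k}$, hence $|c_{k-1}|\le\frac1q<1$, so $\ell\le k-1$ by minimality of $\ell$. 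A short case check---$\ell=0$, where $c_0=m_0$; or $\ell\ge1$, where $|c_{\ell-1}|>1$ forces $|m_\ell|=1$ and keeps $c_\ell$ within distance less than $\frac1q$ of the integer $m_\ell$---shows that in both cases $|m_\ell|=1$ and $m_\ell=\eta$, where $\eta:=\operatorname{sign}(c_\ell)\in\{\pm1\}$, and moreover
\[
1-\tfrac1q<|c_\ell|\le1 .
\]

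The heart of the matter is a one-step forcing lemma: if $\ell\le j<k$ and $\frac1q<|c_j|\le1$, then also $|c_{j+1}|\le1$ (as $j+1\in[\ell,k]$), and expanding $c_{j+1}=m_{j+1}+\frac1{qc_j}$ and comparing magnitudes and signs forces $m_{j+1}=-\operatorname{sign}(c_j)$---in particular $m_{j+1}=\pm1$---while $\operatorname{sign}(c_{j+1})=-\operatorname{sign}(c_j)$ and $|c_{j+1}|=1-\frac1{q|c_j|}$; any other integer value of $m_{j+1}$ would make $|c_{j+1}|>1$. Thus, as long as the iterates stay above $\frac1q$, the sign of $c_j$ alternates, the corresponding $m_j$ is $\pm1$, and the magnitudes $|c_j|$ evolve under exactly the map $x\mapsto1-\frac1{qx}$ that defines $(x_n)$. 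Since that map is strictly increasing on $(\frac1q,\infty)$ and $x_2<|c_\ell|\le x_1$, I would prove by induction on $i$---the inductive step being legitimate precisely while $x_{i+2}\ge\frac1q$, i.e. for $0\le i\le C(q)-2$, and using $c_k=0$ together with the lower bound $|c_{\ell+i}|>x_{i+2}\ge\frac1q$ to guarantee $\ell+i\neq k$---that
\[
x_{i+2}<|c_{\ell+i}|\le x_{i+1},\qquad \operatorname{sign}(c_{\ell+i})=m_{\ell+i}=(-1)^i\eta
\]
for all $0\le i\le C(q)-1$.

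Finally I would read off the two assertions. Putting $j=\ell+i$ and $\varepsilon:=(-1)^{\ell}\eta\in\{\pm1\}$, the relations $m_{\ell+i}=(-1)^i\eta$ say exactly that $m_j=(-1)^j\varepsilon$ for $\ell\le j\le\ell+C(q)-1$. For the length estimate, the case $i=C(q)-1$ of the displayed bound gives $|c_{\ell+C(q)-1}|>x_{C(q)+1}\ge0$, so $c_{\ell+C(q)-1}\neq0$; since $c_k=0$, this forces $\ell+C(q)-1<k$, that is, $k\ge\ell+C(q)$. The step I expect to need the most care is the index bookkeeping inside the induction: one has to make sure the forcing lemma is invoked only at indices $j=\ell+i$ with $\ell+i+1\le k$ (which itself is obtained inductively from $c_k=0$ and $|c_{\ell+i}|>\frac1q$), and to keep strict and non-strict inequalities straight through the iteration---the borderline case $x_{C(q)}=\frac1q$ needs a moment's thought---so that the chain comes out with exactly $C(q)$ alternating terms and the bound is exactly $k\ge\ell+C(q)$ rather than off by one.
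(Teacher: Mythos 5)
Your proposal is correct and follows essentially the same route as the paper: the same one-step forcing argument (a nonzero integer $m_{j+1}=c_{j+1}-\frac{1}{qc_j}$ trapped in $(-2,2)$ must be $-\operatorname{sgn}(c_j)$, giving sign alternation and $|c_{j+1}|=1-\frac{1}{q|c_j|}$), iterated against the comparison sequence $(x_n)$ with the same split into the cases $\ell=0$ and $\ell\geq1$. The only difference is a harmless reindexing: the paper anchors the induction at $x_1<|c(q,\mathbf{m}_{\ell-1})|\leq x_0$, while you anchor it at $x_2<|c_\ell|\leq x_1$.
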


\begin{proof}
Suppose for some $i\in\left\{ 0,\dotsc,k-1\right\} $ we have $\left|c(q,\mathbf{m}_{i})\right|>\frac{1}{q}$
and $\left|c(q,\mathbf{m}_{i+1})\right|\leq1$. Then we have
\begin{equation}
x_{j}<\left|c(q,\mathbf{m}_{i})\right|\leq x_{j-1}\label{eq:cq-interval}
\end{equation}
for some $1\leq j\leq h$. We have $\left|c(q,\mathbf{m}_{i+1})\right|\leq1$
and 
\[
0<\left|\frac{1}{qc(q,\mathbf{m}_{i})}\right|<\frac{1}{qx_{j}}\leq1,
\]
and the difference
\[
m_{i+1}=c(q,\mathbf{m}_{i+1})-\frac{1}{qc(q,\mathbf{m}_{i})}
\]
is a non-zero integer. This implies that $m_{i+1}=\pm1$, moreover
$\operatorname{sgn}(m_{i+1})=\operatorname{sgn}(c(q,\mathbf{m}_{i+1}))=-\operatorname{sgn}(c(q,\mathbf{m}_{i}))$
and $0<\left|c(q,\mathbf{m}_{i+1})\right|<1$, in particular $i\leq k-2$.
We also have

\[
c(q,\mathbf{m}_{i+1})=\operatorname{sgn}(c(q,\mathbf{m}_{i}))\left(-1+\frac{1}{q\left|c(q,\mathbf{m}_{i})\right|}\right),
\]
therefore
\[
x_{j+1}<\left|c(q,\mathbf{m}_{i+1})\right|\leq x_{j}.
\]

If $\ell>0$, then (\ref{eq:cq-interval}) holds for $i=\ell-1$ and
$j=1$, hence also for $1\leq j\leq C(q)$ and $i=\ell+j-2$. Moreover,
we have
\[
m_{\ell+j-1}=(-1)^{j}\operatorname{sgn}(c(q,\mathbf{m}_{\ell-1})),\qquad j=1,\dotsc,C(q),
\]
and $k\geq\ell+C(q)$.

If $\ell=0$, then $m_{0}=c(q,\mathbf{m}_{0})=\pm1$, in particular
$m_{0}=\operatorname{sgn}(m_{0})$. Now (\ref{eq:cq-interval}) holds
for $j=2$ and $i=0$, hence also for $2\leq j\leq C(q)$ and $i=j-2$.
Moreover, we have
\[
m_{j-1}=(-1)^{j-1}m_{0},\qquad j=2,\dotsc,C(q),
\]
and $k\geq C(q)=\ell+C(q)$.
\end{proof}

\section{\label{sec:Proof1}Proof of Theorem~\ref{thm:forbidden-qs-are-dense}}

Let
\[
g_{n}(\lambda)=f_{n}(\lambda,1,-1,\dotsc,(-1)^{n-1}),\qquad n\geq0.
\]
We are going to consider loops of the form~(\ref{eq:pm1+c}). It
follows from~(\ref{eq:c-by-continuants}) and~(\ref{eq:fn-ratio-shift})
that such loops correspond to integer values of $\frac{g_{n+1}(\lambda)}{\lambda g_{n}(\lambda)}$.
\begin{lem}
\label{lem:gn-roots}We have
\[
g_{n}=\pm\prod_{j=1}^{n}\left(\lambda-\left(e\left(\frac{j}{2n+2}\right)+e\left(-\frac{j}{2n+2}\right)\right)\right).
\]
\end{lem}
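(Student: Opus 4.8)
The plan is to recognise $g_n$ as a signed rescaled Chebyshev polynomial of the second kind. First I would specialise the recurrence of Lemma~\ref{lem:fn-recurrence} to the alternating sequence $m_j=(-1)^j$, obtaining $g_0=1$, $g_1=\lambda$, and $g_{n+1}=(-1)^n\lambda g_n+g_{n-1}$ for $n\ge1$. The factor $(-1)^n$ is the only obstruction to a clean three-term recurrence, and I would remove it by a sign change: put $\varepsilon_0=1$ and $\varepsilon_{n+1}=(-1)^n\varepsilon_n$ (so that $\varepsilon_n=(-1)^{\lfloor n/2\rfloor}=\pm1$), and set $S_n=\varepsilon_n^{-1}g_n$. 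A direct check, using $(-1)^n\varepsilon_n/\varepsilon_{n+1}=1$ and $\varepsilon_{n-1}/\varepsilon_{n+1}=(-1)^n(-1)^{n-1}=-1$, shows that $S_0=1$, $S_1=\lambda$ and $S_{n+1}=\lambda S_n-S_{n-1}$ for $n\ge1$; thus $S_n(\lambda)=U_n(\lambda/2)$, the Chebyshev polynomial of the second kind, evaluated at $\lambda/2$.

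Next I would establish the classical closed form for $S_n$. By induction on the recurrence one gets $S_n(2\cos\theta)\sin\theta=\sin((n+1)\theta)$ for all $\theta$ (equivalently, writing $\lambda=u+u^{-1}$, $S_n=(u^{n+1}-u^{-n-1})/(u-u^{-1})$ whenever $u\ne\pm1$). Hence, parametrising $(-2,2)$ by $\lambda=2\cos\theta$ with $\theta\in(0,\pi)$, the zeros of $S_n$ are exactly the $n$ distinct reals $2\cos(\pi j/(n+1))$, $j=1,\dots,n$. By Fact~\ref{fact:f-degree} the polynomial $g_n$, and therefore $S_n$, has degree $n$ in $\lambda$, with $S_n$ monic; since a monic polynomial of degree $n$ with $n$ distinct roots equals the product of the corresponding linear factors, we conclude $S_n(\lambda)=\prod_{j=1}^n\bigl(\lambda-2\cos(\pi j/(n+1))\bigr)$.

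Finally, since $2\cos(\pi j/(n+1))=e\!\left(j/(2n+2)\right)+e\!\left(-j/(2n+2)\right)$, this gives
\[
g_n=\varepsilon_n S_n=\pm\prod_{j=1}^{n}\Bigl(\lambda-\bigl(e\!\left(\tfrac{j}{2n+2}\right)+e\!\left(-\tfrac{j}{2n+2}\right)\bigr)\Bigr),
\]
which is the asserted formula. I do not expect a genuine obstacle here: the argument is a routine induction, and the only point requiring care is the bookkeeping of the sign sequence $\varepsilon_n$ forced by the factor $m_n=(-1)^n$ in the recurrence — and since the statement is only up to sign, even the exact value of $\varepsilon_n$ is immaterial.
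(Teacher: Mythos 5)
Your proof is correct, but it takes a genuinely different route from the paper. You specialise the recurrence of Lemma~\ref{lem:fn-recurrence} to $m_j=(-1)^j$, strip the alternating sign by the substitution $S_n=\varepsilon_n^{-1}g_n$ with $\varepsilon_n=(-1)^{\lfloor n/2\rfloor}$, and recognise $S_n(\lambda)=U_n(\lambda/2)$ as a Chebyshev polynomial of the second kind, whose factorisation follows from the trigonometric identity $S_n(2\cos\theta)\sin\theta=\sin((n+1)\theta)$; your sign bookkeeping checks out ($(-1)^n\varepsilon_n/\varepsilon_{n+1}=1$ and $\varepsilon_{n-1}/\varepsilon_{n+1}=-1$ are both right), and the degree/monicity count needed to conclude that you have found all the roots comes either from Fact~\ref{fact:f-degree} or directly from the normalised recurrence. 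The paper instead works from the explicit expansion~(\ref{eq:fn-explicit}): it counts the admissible index sets $I\in\mathcal{I}$ via a bijection with subsets of $\{0,2,\dotsc,2n-2\}$ to get $g_n=\pm\sum_{0\leq\ell\leq n/2}(-1)^{\ell}\binom{n-\ell}{n-2\ell}\lambda^{n-2\ell}$, and then verifies directly that each $\omega=e(\frac{j}{2n+2})+e(-\frac{j}{2n+2})$ is a root by the binomial theorem, a combinatorial identity, and a vanishing sum of roots of unity. Your argument is shorter and more conceptual, replacing the combinatorial identity (which the paper leaves as an induction exercise) with the classical Chebyshev closed form; the paper's computation yields the explicit coefficient formula as a byproduct and stays entirely within the combinatorial framework it has already set up. Both approaches are complete and correct.
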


\begin{proof}
A part of the argument in this proof was contained in the proof of
Theorem 4 in~\cite{ForbiddenConductors}, but we repeat it because
of the different generality and setup. We have, by~(\ref{eq:fn-explicit}),
\[
g_{n}=\sum_{0\leq\ell\leq n/2}\left(\sum_{\substack{I\in\mathcal{I}\\
I\subseteq[0,n)\\
\left|I\right|=n-2\ell
}
}(-1)^{\left\lfloor (n-2\ell)/2\right\rfloor }\right)\lambda^{n-2\ell}.
\]
The number of sets $I$ in the inner sum is equal to the number of
$(n-2\ell)$-element subsets of $\left\{ 0,2,\dotsc,2n-2\right\} $,
by the bijective mapping where $I=\left\{ a_{0},\dotsc,a_{n-2\ell-1}\right\} $
with $a_{0}<\dotsc<a_{n-2\ell-1}$ corresponds to
\[
I'=\left\{ a_{0},a_{1}+1,\dotsc,a_{n-2\ell-1}+n-2\ell-1\right\} \subseteq\left\{ 0,2,\dotsc,2n-2\ell-2\right\} .
\]
Hence
\[
g_{n}=\pm\sum_{0\leq\ell\leq n/2}(-1)^{\ell}\binom{n-\ell}{n-2\ell}\lambda^{n-2\ell}.
\]
By Fact~\ref{fact:f-degree} it is enough to show that 
\[
g_{n}\left(e\left(\frac{j}{2n+2}\right)+e\left(-\frac{j}{2n+2}\right)\right)=0,\qquad j=1,\dotsc,n.
\]
Fix $j$ and $\omega=e\left(\frac{j}{2n+2}\right)+e\left(-\frac{j}{2n+2}\right)$.
We have
\begin{align*}
g_{n}(\omega) & =\pm\sum_{0\leq\ell\leq n/2}(-1)^{\ell}\binom{n-\ell}{n-2\ell}\sum_{0\leq k\leq n-2\ell}\binom{n-2\ell}{k}e\left(\frac{(2k-n+2\ell)j}{2n+2}\right)\\
 & =\pm\sum_{0\leq m\leq n}e\left(\frac{(2m-n)j}{2n+2}\right)\sum_{0\leq\ell\leq n/2}(-1)^{\ell}\binom{n-\ell}{n-2\ell}\binom{n-2\ell}{m-\ell}\\
 & =\pm\sum_{0\leq m\leq n}e\left(\frac{(2m-n)j}{2n+2}\right)=0,
\end{align*}
where we use the fact that $\binom{n-2\ell}{m-\ell}=0$ when $m-\ell>n-2\ell$,
and the identity
\[
\sum_{0\leq\ell\leq n/2}(-1)^{\ell}\binom{n-\ell}{n-2\ell}\binom{n-2\ell}{m-\ell}=1,\qquad0\leq m\leq n,
\]
which follows by induction.
\end{proof}
\begin{lem}
We have
\begin{equation}
g_{n}^{2}+g_{n+1}g_{n-1}=1,\qquad n\geq1.\label{eq:gn-recurrence}
\end{equation}
\end{lem}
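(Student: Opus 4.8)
The plan is to deduce~\eqref{eq:gn-recurrence} by a short induction on $n$, using nothing beyond the recurrence of Lemma~\ref{lem:fn-recurrence}. Setting $m_{j}=(-1)^{j}$ in~\eqref{eq:fn-recurrence} and recalling the definition of $g_{n}$ yields the specialized three-term recurrence
\[
g_{n+1}=(-1)^{n}\lambda\,g_{n}+g_{n-1},\qquad n\geq1,
\]
together with $g_{0}=1$, $g_{1}=\lambda$, and hence $g_{2}=1-\lambda^{2}$. The base case $n=1$ of~\eqref{eq:gn-recurrence} is then just the identity $g_{1}^{2}+g_{2}g_{0}=\lambda^{2}+(1-\lambda^{2})=1$.

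For the inductive step I would suppose $g_{n}^{2}+g_{n+1}g_{n-1}=1$ for some $n\geq1$ and substitute $g_{n-1}=g_{n+1}-(-1)^{n}\lambda g_{n}$ (the recurrence at index $n$) into the induction hypothesis, rewriting it as
\[
g_{n}^{2}+g_{n+1}^{2}-(-1)^{n}\lambda\,g_{n}g_{n+1}=1 .
\]
Then I would expand the target expression using the recurrence at index $n+1$, namely $g_{n+2}=(-1)^{n+1}\lambda g_{n+1}+g_{n}$, to obtain
\[
g_{n+1}^{2}+g_{n+2}g_{n}=g_{n+1}^{2}+g_{n}^{2}-(-1)^{n}\lambda\,g_{n+1}g_{n},
\]
which equals $1$ by the previous display. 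This closes the induction and establishes~\eqref{eq:gn-recurrence} for all $n\geq1$.

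There is no genuine obstacle here; the one thing to watch is the alternating factor $(-1)^{n}$ in the recurrence. Because of it the naive ``continuant determinant'' telescoping is unavailable, and in fact the Cassini-type difference $g_{n+1}g_{n-1}-g_{n}^{2}$ is \emph{not} independent of $n$; the sign cancels precisely in the symmetric combination $g_{n}^{2}+g_{n+1}g_{n-1}$, which is exactly what makes the two displays above coincide. (Alternatively, by Lemma~\ref{lem:gn-roots} one has $g_{n}=\pm U_{n}(\lambda/2)$ with $U_{n}$ the Chebyshev polynomial of the second kind, and~\eqref{eq:gn-recurrence} becomes the classical relation $U_{n}(x)^{2}-U_{n+1}(x)U_{n-1}(x)=1$ after tracking the signs; but the direct induction is quicker and fully self-contained.)
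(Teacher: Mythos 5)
Your proof is correct and is essentially the paper's own argument: the same base case from $g_{0}=1$, $g_{1}=\lambda$, $g_{2}=1-\lambda^{2}$, and the same inductive step applying the specialized recurrence $g_{n+1}=(-1)^{n}\lambda g_{n}+g_{n-1}$ twice (once at index $n+1$ to expand $g_{n+2}$, once at index $n$ to recover $g_{n-1}$), with the alternating sign cancelling exactly as you note. The only difference is presentational: you rewrite the induction hypothesis first and then match it, while the paper chains the equalities in one display.
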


\begin{proof}
By Lemma~\ref{lem:fn-recurrence} we have
\[
g_{0}=1,
\]
\[
g_{1}=\lambda,
\]
and
\[
g_{2}=-\lambda^{2}+1,
\]
so~(\ref{eq:gn-recurrence}) holds for $n=1$. If~(\ref{eq:gn-recurrence})
holds for a certain $n\geq1$, then, using~(\ref{eq:fn-recurrence})
twice, we obtain

\begin{align*}
g_{n+1}^{2}+g_{n+2}g_{n} & =g_{n+1}^{2}+((-1)^{n+1}\lambda g_{n+1}+g_{n})g_{n}\\
 & =g_{n}^{2}+(g_{n+1}-(-1)^{n}\lambda g_{n})g_{n+1}\\
 & =g_{n}^{2}+g_{n+1}g_{n-1}=1,
\end{align*}
so the assertion follows by induction.
\end{proof}
\begin{lem}
\label{lem:non-unit-pm1+c}If $n\geq1$, $c\in\mathbf{Z}\setminus\left\{ 0,(-1)^{n+1}\right\} $
and $q>0$ are such that $\mathbf{m}=(1,-1,\dotsc,(-1)^{n-1},(-1)^{n}+c)\in\mathbf{Z}^{n+1}$
satisfies
\[
\mathbf{m}\in L(q),
\]
then $w(q,\mathbf{m})\neq1$.
\end{lem}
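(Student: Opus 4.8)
The plan is to translate everything into the one-variable polynomials $g_n$ and then pin down the weight using the quadratic identity~(\ref{eq:gn-recurrence}). Throughout write $q=\lambda^2$ with $\lambda>0$, and recall that Lemma~\ref{lem:fn-recurrence} gives $g_{n+1}=(-1)^n\lambda g_n+g_{n-1}$ for $n\geq1$.

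First I would rewrite $c(q,\mathbf{m})$ and $w(q,\mathbf{m})$ for $\mathbf{m}=(1,-1,\dotsc,(-1)^{n-1},(-1)^n+c)$. Since $\mathbf{m}_{n-1}=(1,-1,\dotsc,(-1)^{n-1})$ one has $f_n(\lambda,\mathbf{m}_{n-1})=g_n(\lambda)$, and applying~(\ref{eq:fn-recurrence}) with $m_n=(-1)^n+c$ gives $f_{n+1}(\lambda,\mathbf{m})=((-1)^n+c)\lambda g_n+g_{n-1}=g_{n+1}(\lambda)+c\lambda g_n(\lambda)$. Hence by~(\ref{eq:c-by-continuants}) and~(\ref{eq:w-by-continuants}),
\[
c(q,\mathbf{m})=\frac{g_{n+1}(\lambda)+c\lambda g_n(\lambda)}{\lambda g_n(\lambda)},\qquad w(q,\mathbf{m})=\left|g_n(\lambda)\right|,
\]
where $g_n(\lambda)\neq0$ because $\mathbf{m}$ is a path. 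Next I would use that $\mathbf{m}$ is a loop: $c(q,\mathbf{m})=0$ forces $g_{n+1}(\lambda)=-c\lambda g_n(\lambda)$, and then, again from $g_{n+1}=(-1)^n\lambda g_n+g_{n-1}$, also $g_{n-1}(\lambda)=-(c+(-1)^n)\lambda g_n(\lambda)$. Substituting both relations into~(\ref{eq:gn-recurrence}) yields
\[
1=g_n(\lambda)^2+g_{n+1}(\lambda)g_{n-1}(\lambda)=g_n(\lambda)^2\bigl(1+c(c+(-1)^n)\lambda^2\bigr).
\]
Finally, if $w(q,\mathbf{m})=\left|g_n(\lambda)\right|$ were equal to $1$, then $g_n(\lambda)^2=1$, so the displayed identity forces $c(c+(-1)^n)\lambda^2=0$; since $\lambda^2=q>0$ and $c$ is an integer, this gives $c=0$ or $c=-(-1)^n=(-1)^{n+1}$, contradicting the hypothesis $c\in\mathbf{Z}\setminus\{0,(-1)^{n+1}\}$. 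Therefore $w(q,\mathbf{m})\neq1$.

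I do not expect a genuine obstacle: the computation is short once the reduction to the $g_n$ is in place, and~(\ref{eq:gn-recurrence}) already holds for $n=1$, so no separate base case is needed. The step most prone to error is the first one—correctly identifying $f_{n+1}(\lambda,\mathbf{m})$ as $g_{n+1}+c\lambda g_n$ with the right sign on $c$, and then eliminating $g_{n-1}$—together with the (automatic) nonvanishing of the denominator $\lambda g_n(\lambda)$, which is guaranteed by the assumption that $\mathbf{m}$ is a loop.
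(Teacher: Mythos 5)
Your proof is correct and follows essentially the same route as the paper: both reduce the loop condition to $g_{n+1}(\sqrt q)=-c\sqrt q\,g_n(\sqrt q)$ and substitute into the identity $g_n^2+g_{n+1}g_{n-1}=1$ to get $g_n^2\left(1+cq(c+(-1)^n)\right)=1$, whence $w(q,\mathbf m)=|g_n(\sqrt q)|\neq1$ precisely because $c\notin\{0,(-1)^{n+1}\}$ and $q>0$. The only cosmetic difference is that you argue by contradiction from $g_n^2=1$, while the paper writes out the closed formula $w(q,\mathbf m)=\left|1+cq(c+(-1)^n)\right|^{-1/2}$.
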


\begin{proof}
It follows from~(\ref{eq:fn-ratio-shift}) and~(\ref{eq:c-by-continuants})
that
\begin{equation}
\frac{g_{n+1}(\sqrt{q})}{\sqrt{q}g_{n}(\sqrt{q})}=\frac{f_{n+1}(\sqrt{q},\mathbf{m})}{\sqrt{q}f_{n}(\sqrt{q},\mathbf{m}_{n-1})}-c=-c.\label{eq:value-as-ratio}
\end{equation}
By~(\ref{eq:gn-recurrence}) and~(\ref{eq:fn-recurrence}) we have
\[
g_{n+1}^{2}(\sqrt{q})+(-1)^{n+1}\sqrt{q}g_{n+1}(\sqrt{q})g_{n}(\sqrt{q})+g_{n}^{2}(\sqrt{q})=1,
\]
so~(\ref{eq:value-as-ratio}) implies
\[
g_{n}^{2}(\sqrt{q})\left(c^{2}q+(-1)^{n}cq+1\right)=1.
\]
Hence, by~(\ref{eq:w-by-continuants}), we have
\begin{align*}
w(q,\mathbf{m}) & =\left|f_{n}(\sqrt{q},\mathbf{m}_{n-1})\right|\\
 & =\left|g_{n}(\sqrt{q})\right|\\
 & =\frac{1}{\sqrt{\left|1+cq(c+(-1)^{n})\right|}}.
\end{align*}
\end{proof}
To complete the proof of Theorem~\ref{thm:forbidden-qs-are-dense}
let
\begin{multline*}
U_{n}=\left\{ \left(e\left(\frac{j}{2n+2}\right)+e\left(-\frac{j}{2n+2}\right)\right)^{2}:1\leq j\leq n,(j,n+1)=1\right\} ,\\
n\geq1.
\end{multline*}
We show that every element of the set
\[
U=\bigcup_{n=1}^{\infty}U_{n}
\]
is an accumulation point of the set of forbidden $q$s. Since $U$
is dense in the interval $[0,4)$ and the set of accumulation points
on the real line is closed, the assertion follows.

Note that the sets $U_{n}$ are pairwise disjoint. Fix $n$ and $t_{0}\in U_{n}$.
Let $t_{1}>t_{0}$ be such that $t_{1}<4$ and
\begin{equation}
(t_{0},t_{1})\cap U_{k}=\emptyset\quad\text{for}\quad k\in\left\{ 1,\dotsc,n-1,n+1\right\} .\label{eq:disjoint-interval}
\end{equation}
Let $\epsilon=\operatorname{sgn}\left(g_{n+1}(t_{1})/g_{n}(t_{1})\right)$.
By Lemma~\ref{lem:gn-roots} and~(\ref{eq:disjoint-interval}) the
rational function 
\[
\frac{g_{n+1}(x)}{\sqrt{x}g_{n}(x)}
\]
has no roots and no singularities in the interval $x\in(t_{0},t_{1})$.
Moreover, by~(\ref{eq:c-by-continuants}) and~(\ref{eq:disjoint-interval}),
every sequence of the form~(\ref{eq:pm1+c}) is a path for all $q\in(t_{0},t_{1})$.
We have
\[
\lim_{x\to t_{0}^{+}}\frac{\epsilon g_{n+1}(x)}{\sqrt{x}g_{n}(x)}=+\infty,
\]
so, by the Darboux property, there exists a sequence $q_{k}\underset{k\to\infty}{\longrightarrow}t_{0}^{+}$
such that
\[
\frac{\epsilon g_{n+1}(q_{k})}{\sqrt{x}g_{n}(q_{k})}=c_{k}\in\mathbf{Z}\cap[3,+\infty),
\]
and in fact $c_{k}\underset{k\to\infty}{\longrightarrow}+\infty$.
It follows from~(\ref{eq:fn-ratio-shift}) and~(\ref{eq:c-by-continuants})
that for $\mathbf{m}\in\mathbf{Z}^{n+1}$, 
\[
\mathbf{m}=(1,-1,\dotsc,(-1)^{n-1},(-1)^{n}-\epsilon c_{k}),
\]
we have $\mathbf{m}\in L(q_{k})$. Hence $q_{k}$ is forbidden by
Lemma~\ref{lem:non-unit-pm1+c}.\hfill{}$\qed$

\section{\label{sec:Proof2}Proof of Theorem~\ref{thm:forbidden-rational-qs-accumulate}}

For $\mathbf{m}=(1,-1,1,-1,c)$ and rational positive $q$ we have,
by~(\ref{eq:fn-explicit}),~(\ref{eq:c-by-continuants}) and~(\ref{eq:w-by-continuants}),
\[
c(q,\mathbf{m})=c+\frac{2-q}{1-3q+q^{2}}
\]
and
\[
w(q,\mathbf{m})=\left|1-3q+q^{2}\right|.
\]
We put $q=a/b$, $(a,b)=1$, and we obtain
\[
c(q,\mathbf{m})=c+\frac{2b^{2}-ab}{b^{2}-3ab+a^{2}}.
\]
Existence of $c\in\mathbf{Z}$ such that $(1,-1,1,-1,c)\in L(q)$
is therefore equivalent to $b^{2}-3ab+a^{2}\mid2b^{2}-ab$ which,
in turn, reduces to
\begin{equation}
b^{2}-3ab+a^{2}=\pm1\label{eq:unit-norm}
\end{equation}
when $(a,b)=1$. If~(\ref{eq:unit-norm}) holds for any integers
$a,b$ (not necessarily relatively prime) such that
\begin{equation}
b^{2}>1\quad\text{and}\quad a/b\neq1,2,\label{eq:sufficient-conditions}
\end{equation}
then 
\[
\mathbf{m}=(1,-1,1,-1,-(2b^{2}-ab)/(b^{2}-3ab+a^{2})),
\]
is a path for $q\neq1,2,\frac{3\pm\sqrt{5}}{2}$ by~(\ref{eq:c-by-continuants})
and Lemma~\ref{lem:gn-roots}. Hence $\mathbf{m}\in L\left(a/b\right)$.
We have $w(a/b,\mathbf{m})=1/b^{2}\neq1$, so $q=a/b$ is forbidden.

To find $a$ and $b$ satisfying~(\ref{eq:unit-norm}) and~(\ref{eq:sufficient-conditions})
we rewrite~(\ref{eq:unit-norm}) as
\begin{equation}
\left(a-\omega_{1}b\right)\left(a-\omega_{2}b\right)=\pm1\label{eq:unit-norm-rewritten}
\end{equation}
where $\omega_{1}=\frac{3+\sqrt{5}}{2}$ and $\omega_{2}=\frac{3-\sqrt{5}}{2}$.
Let $M>0$ be large. Since $\left\{ 1,\omega_{2}\right\} $ is a basis
of $\mathbf{Z}\left[\frac{1+\sqrt{5}}{2}\right]$, we can find $k$
large enough and integers $a,b$ such that
\[
a-b\omega_{2}=\left(\frac{1+\sqrt{5}}{2}\right)^{k}>M,
\]
in particular $b\neq0$. The norm of a unit is $\pm1$, so~(\ref{eq:unit-norm-rewritten})
holds. Therefore 
\[
\left|\frac{a}{b}-\omega_{1}\right|=\frac{1}{\left|b\right|}\left|a-b\omega_{1}\right|<\frac{1}{M}.
\]
In particular $\left|\frac{a}{b}-\omega_{1}\right|<\frac{1}{3}$ implies
$b\neq\pm1$ and $a/b\neq1,2$. Hence $q=a/b$ is forbidden and it
is arbitrarily close to $\omega_{1}$. The case of $\omega_{2}$ is
analogous.\hfill{}$\qed$

\medskip{}
We remark that for general $\mathbf{m}=(m_{0},m_{1},m_{2},m_{3},m_{4})$
with non-zero terms we would get
\[
c(a/b,\mathbf{m})=m_{4}+\frac{(m_{0}+m_{2})b^{2}+m_{0}m_{1}m_{2}ab}{b^{2}+(m_{0}m_{1}+m_{0}m_{3}+m_{2}m_{3})ab+m_{0}m_{1}m_{2}m_{3}a^{2}},
\]
so we would essentially have to solve~(\ref{eq:unit-norm-rewritten})
for
\[
\omega_{1,2}=\frac{1}{2}\left(-u-v_{1}-v_{2}\pm\sqrt{u^{2}+v_{1}^{2}+v_{2}^{2}-2v_{1}v_{2}+2uv_{1}+2uv_{2}}\right)
\]
where $v_{1}=\frac{1}{m_{0}m_{1}}$, $v_{2}=\frac{1}{m_{2}m_{3}}$
and $u=\frac{1}{m_{1}m_{2}}$. However, we cannot obtain an accumulation
point $\max(\omega_{1},\omega_{2})>\frac{3+\sqrt{5}}{2}$ in this
way, as by Proposition~\ref{prop:long-chain}, we would need to have
$(m_{i},m_{i+1},m_{i+2})=(\epsilon,-\epsilon,\epsilon)$ for $i=0$
or $1$ and $\epsilon=\pm1$. In the case $i=0$ we have $v_{1}=u=-1$
and $v_{2}=\frac{\epsilon}{m_{3}}$, so
\[
\omega_{1,2}=\frac{1}{2}\left(2-\frac{\epsilon}{m_{3}}+\sqrt{4+\frac{1}{m_{3}^{2}}}\right)\leq\frac{3+\sqrt{5}}{2}.
\]
The case $i=1$ is analogous.

Reaching higher accumulation points would therefore require the study
of longer loops. This, in turn, leads to searching for units in extensions
$\mathbf{Q}(\omega)$ of a higher degree, and these units would not,
in general, be of the form $a+b\omega$ that we require.

\section{Acknowledgments}

This research was supported by the grant 2021/41/B/ST1/00241 from the National Science
Centre, Poland. The author thanks Jerzy Kaczorowski for his remarks.

\end{document}